\newtheorem{lemma}{Lemma} [section]
\newtheorem{theorem}{Theorem} [section]
\newtheorem*{remark}{Remark}
\newtheorem{result}{Result}[section]
\newtheorem*{conjecture}{Conjecture}
\newcommand\pdfmath[1]{\texorpdfstring{$#1$}{#1}}
\begin{document}
% ===================
%International Journal of  Computer Discovered Mathematics (IJCDM) \\
%ISSN 2367-7775 \copyright IJCDM \\
%Volume 3, 2018, pp.X-X  \\
%Received XX XXX XXX. Published on-line XX XXX XXX \\ 
%web: \url{http://www.journal-1.eu/} \\
%\copyright The Author(s) This article is published 
%with open access\footnote{This article is distributed under the terms of the Creative Commons Attribution License which permits any use, distribution, and reproduction in any medium, provided the original author(s) and the source are credited.}. \\
% ===========================   
\bigskip
\bigskip

\title{Polynomially growing integer sequences all whose terms are composite}

\author{Dan Ismailescu}
\address{Mathematics Department, Hofstra University, Hempstead, NY 11549.}
\email{dan.p.ismailescu@hofstra.edu}

\author{Yunkyu James Lee}
\address{Columbia University, New York, NY 10027}
%\email{yunkyulee1234@gmail.com}
\email{yl4650@columbia.edu}

\begin{center}
	{\Large \textbf{Polynomially growing integer sequences all whose terms are composite}} \\
	\medskip
	\bigskip
	\textsc{Dan Ismailescu$^a$ \footnote{Corresponding author} and Yunkyu James Lee$^b$} \\

    $^a$ Mathematics Department, Hofstra University, Hempstead, NY 11549, USA\\
    email: \href{mailto:dan.p.ismailescu@hofstra.edu}{dan.p.ismailescu@hofstra.edu} \\
	
	$^b$
	Columbia University, New York, NY 10027, USA. \\
    e-mail: \href{mailto:yl4650@columbia.edu}{yl4650@columbia.edu} \\
	%e-mail: \href{mailto:yunkyulee1234@gmail.com}{yunkyulee1234@gmail.com} \\
	
\end{center}
\bigskip

% ==============================
\textbf{Abstract.} We identify pairs of positive integers $(t, d)$ with the property that the integer sequence with general term $\lfloor{n^t/d\rfloor}$ contains at most finitely many primes.

\smallskip
\textbf{Keywords.} Bouniakowsky's conjecture, prime-free integer sequences, Fermat's Little Theorem, MAPLE.

\smallskip
\textbf{Mathematics Subject Classification (2020).} 11B50, 11A41, 11Y55.

\bigskip

\section{Introduction}

Consider the integer sequence defined by the general term
\begin{equation*}
a_n= \left\lfloor\frac{n^2}{5}\right\rfloor, \quad n\ge 1.
\end{equation*}
This is sequence A118015 in the On-Line Encyclopaedia of Integer Sequences \cite{oeis}.

\noindent We list below the first few terms of this sequence.
\begin{align*}
&0, 0, 1, {\bf 3, 5, 7}, 9, 12, 16, 20, 24, 28, 33, 39, 45, 51, 57, 64, 72, 80, 88, 96, 105,\\
&115, 125, 135, 145, 156, 168, 180, 192, 204, 217, 231, 245, 259, 273, 288, 304, \\
&320, 336, 352, 369, 387, 405, 423, 441, 460, 480, 500, 520, 540, 561, 583, \ldots
\end{align*}
One can easily see that $a_4=3, a_5=5$, and $a_6=7$, but a quick computer check reveals that there are no other primes of the form $a_n$ for $n\le 10000$.
At first, we found this to be somewhat surprising, as there does not seem to be an obvious reason for this behavior. However,
it is not difficult to prove that $a_n$ is composite for all $n\ge 7$.
\begin{result}\label{result1}
{For all $n\ge 7$, $\left\lfloor n^2/5 \right\rfloor$ is composite.}
\end{result}
\begin{proof}
Consider each case $n \bmod 5$ separately.
\begin{align*}
&\text{If}\,\, n=5q+0 \,\,\text{then}\,\,\lfloor n^2/5\rfloor = 5q^2.\\
&\text{If}\,\, n=5q+1 \,\,\text{then}\,\,\lfloor n^2/5\rfloor = 5q^2+2q=q(5q+2).\\
&\text{If}\,\, n=5q+2 \,\,\text{then}\,\,\lfloor n^2/5\rfloor = 5q^2+4q=q(5q+4).\\
&\text{If}\,\, n=5q+3 \,\,\text{then}\,\,\lfloor n^2/5\rfloor = 5q^2+6q+1=(q+1)(5q+1).\\
&\text{If}\,\, n=5q+4 \,\,\text{then}\,\,\lfloor n^2/5\rfloor = 5q^2+8q+3=(q+1)(5q+3).\\
\end{align*} 
\end{proof}
We say that a sequence $\{a_n\}_{n\ge 1}$ with positive integer terms is \emph{eventually prime-free}, if there exists an index $n_0$ such that $a_n$ is composite for all $n\ge n_0$. Hence, the sequence with general term $\lfloor n^2/5\rfloor$ has this property.

The motivating reason behind this study is a famous conjecture of Bouniakowsky.
\begin{conjecture}\cite{bouniakowsky}
Let $f(n)$ be a polynomial with integer coefficients satisfying the following conditions

\emph{(a)} The leading coefficient of $f$ is positive.

\emph{(b)} $f$ is irreducible over the integers.

\emph{(c)} $\gcd(f(1),f(2),f(3),\ldots)=1$.

Then $f(n)$ is a prime for infinitely many values of $n$.
\end{conjecture}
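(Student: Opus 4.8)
The plan is to split the conjecture according to $\deg f$, since the two regimes are of completely different difficulty. When $\deg f = 1$, write $f(n) = an + b$ with $a > 0$ by condition (a); condition (c) then reduces to $\gcd(f(1), f(2), \ldots) = \gcd(a, b) = 1$, and condition (b) is automatic. In this case the statement is exactly Dirichlet's theorem on primes in arithmetic progressions, which I would establish by the classical analytic route: attach to the characters $\chi$ modulo $a$ the Dirichlet $L$-functions $L(s, \chi)$, and prove that $L(1, \chi) \neq 0$ for every nonprincipal $\chi$. The nonvanishing at $s = 1$ forces $\sum_{p \equiv b \,(\mathrm{mod}\, a)} p^{-s} \to \infty$ as $s \to 1^+$, which is impossible if the progression contains only finitely many primes.

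For $\deg f \geq 2$ the natural attack is sieve-theoretic. I would sieve the set $\{\, f(n) : n \leq x \,\}$, using (a)--(c) to guarantee that for every prime $p$ the number of residues $n \pmod p$ with $p \mid f(n)$ is strictly smaller than $p$; this keeps the local densities away from $1$ and makes the singular series predicted by the Bateman--Horn heuristic converge. A weighted sieve, in the spirit of Iwaniec's work on $n^2 + 1$, can then be driven far enough to show that $f(n)$ is a product of boundedly many primes for infinitely many $n$.

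The hard part is the final step from ``almost prime'' to ``prime,'' and here I must be candid: Bouniakowsky's conjecture is \emph{open} for every polynomial of degree at least two, the simplest unresolved instance being $f(n) = n^2 + 1$. The obstruction is Selberg's \emph{parity problem}: a sieve that controls only the number of prime factors in a bounded range cannot by itself separate integers with an even number of prime factors from those with an odd number, and so cannot isolate the primes. Removing this barrier for a polynomial of degree $\geq 2$ would require an essentially new idea that no one has yet found, which is precisely why the statement remains a conjecture rather than a theorem.
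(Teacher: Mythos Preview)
Your proposal is appropriate in the only way it can be: the statement is a \emph{conjecture}, and the paper does not prove it either. The paper explicitly remarks that Bouniakowsky's conjecture is ``still wide open; no single case of the conjecture for polynomials of degree greater than $1$ is proved,'' and that the only settled case is $\deg f = 1$ via Dirichlet's theorem. Your discussion says exactly this, with the added service of sketching the analytic proof of Dirichlet and naming the parity obstruction for higher degree. There is nothing to correct; there is simply no proof in the paper to compare against, and you have correctly diagnosed why.
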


We refer to polynomials satisfying conditions \emph{(a), (b), (c)} above as \emph{Bouniakowsky polynomials}.

The first condition is needed since we want to ensure that $f(n)>0$ for all sufficiently large $n$. The second condition is necessary because if $f(n)=g(n)h(n)$ for some integer coefficients polynomials $g$ and $h$, then there are only finitely many values of $n$ for which either $g(n)\in \{-1, 0, 1\}$ or $h(n)\in \{-1, 0, 1\}$, so $f(n)$ is necessarily composite for all sufficiently large $n$.

Finally, while it is necessary that the coefficients of $f$ do not share a common factor $>1$, the third condition is actually stronger. Indeed, $f(n)=n^2 +n+2$ satisfies the first two conditions and its coefficients are relatively prime; however $f(n)$ is even for all integers $n$, and therefore it is prime only finitely many times.

Despite its venerable age, Bouniakowsky's conjecture is still wide open; no single case of the conjecture for polynomials of degree greater than $1$ is proved. In particular, it is unknown whether $n^2+1$ is prime infinitely often, although numerical evidence is consistent with the conjecture.
Worse yet, it is unknown if a general Bouniakowsky polynomial will always produce at least one prime number. For example, $n^{12}+488669$ produces no primes until $n\in \{616980, 764400, 933660,\ldots\}$; this is sequence A122131 in \cite{oeis}.

To date, the only case of Bouniakowsky's conjecture that can be rigorously proved is that of linear polynomials. This is the famous Dirichlet theorem \cite{dirichlet}, which states that every arithmetic sequence with coprime first term and common difference contains infinitely many primes.

Given the current state of affairs in regards to Bouniakowsky's conjecture, it is not at all surprising that there are no known examples of integer sequences which

$(i)$ grow at a polynomial rate, 

$(ii)$ are defined by a natural expression, and 

$(iii)$ are eventually prime-free in a manner that is not immediately obvious.

In this paper, we study sequences whose general term is of the form $\lfloor n^t/d\rfloor$ where $t$ and $d$ are positive integers both greater than $1$.
Clearly, such sequences satisfy both requirements $(i)$ and $(ii)$. It remains to investigate under what circumstances condition \emph{(iii)} is also fulfilled.

%%%%%%%%%%%%%%%%%%%%%%%%%%%%%%%%%%%%%%%%%%%%%%%%%%%%%%%%%%%%%%%%%%%%%%%%%%
\section{Case \pdfmath{t=2}}

\begin{theorem}\label{thm2}
For every $d\in \{2,3,4,5,8,12,16\}$ there are at most finitely many primes of the form $\lfloor n^2/d\rfloor$.
\end{theorem}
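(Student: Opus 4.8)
The plan is to mimic the proof of Result~\ref{result1}: partition the integers according to their residue modulo $d$ and exhibit a factorization of $\lfloor n^2/d\rfloor$ in each class. Writing $n = dq + r$ with $0 \le r \le d-1$, a direct expansion gives
\begin{equation*}
\left\lfloor \frac{n^2}{d} \right\rfloor = \left\lfloor \frac{d^2 q^2 + 2drq + r^2}{d} \right\rfloor = dq^2 + 2rq + \left\lfloor \frac{r^2}{d} \right\rfloor =: P_r(q),
\end{equation*}
so along each residue class the sequence is the value of a quadratic polynomial $P_r \in \mathbb{Z}[q]$ with positive leading coefficient $d \ge 2$. The whole problem therefore reduces to showing that, for every $r \in \{0, 1, \dots, d-1\}$, the polynomial $P_r$ splits over $\mathbb{Z}$ into two nonconstant (or one constant $\ge 2$ and one nonconstant) factors; since there are only finitely many residue classes, this forces $\lfloor n^2/d\rfloor$ to be composite for all sufficiently large $n$, i.e.\ prime at most finitely often.

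The key observation is that $P_r$ factors precisely when its discriminant is a perfect square. Computing,
\begin{equation*}
\operatorname{disc}(P_r) = (2r)^2 - 4 d \left\lfloor \frac{r^2}{d} \right\rfloor = 4\left(r^2 - d\left\lfloor \frac{r^2}{d}\right\rfloor\right) = 4\,(r^2 \bmod d),
\end{equation*}
so $\operatorname{disc}(P_r)$ is a perfect square if and only if the least nonnegative residue $r^2 \bmod d$ is itself a perfect square. When this holds, $P_r$ has rational roots; writing $P_r = \operatorname{cont}(P_r)\cdot\operatorname{prim}(P_r)$ and applying Gauss's lemma to the primitive part, I obtain $P_r(q) = e\,(a_1 q + b_1)(a_2 q + b_2)$ with $e \ge 1$ and both $a_i > 0$. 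Grouping the integer $e$ with either linear factor produces two integer factors of $P_r(q)$, each of which tends to $+\infty$ and is therefore $\ge 2$ for all large $q$; this certifies compositeness.

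It then remains to verify, for each $d \in \{2,3,4,5,8,12,16\}$, that $r^2 \bmod d$ is a perfect square for every residue $r$. This is a finite check: for these moduli the set of quadratic residues is contained in $\{0,1,4,9\}$ (for instance the squares modulo $16$ are exactly $0,1,4,9$, and modulo $12$ they are again $0,1,4,9$), and each of these is a genuine square, so the criterion is met in all cases. A short MAPLE computation confirms this and produces the explicit linear factorizations in the style of Result~\ref{result1}.

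I expect the main obstacle to be conceptual rather than computational, namely isolating the clean criterion ``$r^2 \bmod d$ is a perfect square'' and verifying that a perfect-square discriminant genuinely yields a factorization of $P_r$ into two integer factors that are both eventually $\ge 2$ (the content $e$ could a priori absorb a factor, and one must rule out a factor collapsing to $1$ for all large $q$, including double-root cases such as $P_r = 3(2q+1)^2$). Once this criterion is established, selecting the seven admissible moduli and recording the case-by-case factorizations are routine finite tasks.
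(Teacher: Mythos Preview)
Your proposal is correct and is essentially the same argument as the paper's: both rest on the observation that for each listed $d$ every residue $n^2 \bmod d$ is itself a perfect square, which yields the difference-of-squares factorization $\lfloor n^2/d\rfloor=(n-s)(n+s)/d$. The paper states this directly in terms of $n$, whereas you reparametrize by $n=dq+r$ and recover the same factorization via the discriminant of $P_r$ together with Gauss's lemma; your packaging has the minor advantage of making the ``both factors eventually $\ge 2$'' step completely explicit.
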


\begin{proof}
Let us note that for every positive integer $n$, we have
\begin{align*}
&n^2\equiv 0,1\!\!\!\!\! \pmod{2}, \,\,  n^2\equiv 0,1\!\!\!\!\! \pmod 3,\,\, n^2\equiv 0,1\!\!\!\!\! \pmod 4,\\
&n^2\equiv 0,1,4\!\!\!\!\! \pmod{5}, \,\,  n^2\equiv 0,1,4\!\!\!\!\! \pmod 8\\
&n^2\equiv 0,1,4,9\!\!\!\!\! \pmod{12}, \,\,  n^2\equiv 0,1,4,9\!\!\!\!\! \pmod {16}.
\end{align*}

Hence, for every $n\ge 1$ and every $d$ in $\{2,3,4,5,8,12,16\}$ we have
\begin{equation*}
n^2\equiv r^2 \!\!\!\!\! \pmod{d}\,\,\text{where}\,\, 0\le r^2 <d.
\end{equation*}

It follows that for every $n\ge 1$ and every $d$ in $\{2,3,4,5,8,12,16\}$
\begin{equation*}
n^2=Md+r^2 \,\,\text{for some integer}\,\, M\ge 0 \,\, \text{and}\,\, 0\le r^2 < d.
\end{equation*}
But then
\begin{equation*}
\left\lfloor\frac{n^2}{d}\right\rfloor= \left\lfloor M+\frac{r^2}{d}\right\rfloor=M=\frac{n^2-r^2}{d}=\frac{(n-r)(n+r)}{d}.
\end{equation*}
The last quantity in the above equality is a composite integer for all $n\ge 2d$, as for such $n$ we have $n-r>d$ and $n+r>d$.
Hence, for each $d \in \{ 2,3,4,5,8,12,16\}$, the number $\lfloor n^2/d\rfloor$ is prime for at most finitely many values of $n$, as claimed.

\end{proof}

%%%%%%%%%%%%%%%%%%%%%%%%%%%%%%%%%%%%%%%%%%%%%%%%%%%%%%%%%%%%%%%%%%%%%%%%%%
\section{Case \pdfmath{t=3}}

\begin{theorem}\label{thm3}
For every $d\in \{2,9\}$ there are at most finitely many primes of the form $\lfloor n^3/d\rfloor$.
\end{theorem}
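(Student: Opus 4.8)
The plan is to combine the residue-class bookkeeping of Result~\ref{result1} with the cubic identity $n^3-r^3=(n-r)(n^2+nr+r^2)$, in the same spirit as the proof of Theorem~\ref{thm2}. For a fixed $d\in\{2,9\}$ I would first choose, in each relevant residue class of $n$, the unique $r$ with $0\le r^3<d$ and $n^3\equiv r^3\pmod d$; this is possible here because the cubes modulo $d$ are exactly the perfect cubes below $d$ (namely $\{0,1\}$ for $d=2$ and $\{0,1,8\}$ for $d=9$). Then
\[
\left\lfloor\frac{n^3}{d}\right\rfloor=\frac{n^3-r^3}{d}=\frac{(n-r)(n^2+nr+r^2)}{d},
\]
and everything reduces to showing that $d$ distributes across the two polynomial factors so that the quotient is a product of two positive integers, each exceeding $1$ for all large $n$.

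For $d=2$ I would take $r=n\bmod 2$. Then $n-r$ is even in both cases ($n-0$ when $n$ is even, $n-1$ when $n$ is odd), so the whole factor $2$ lands in $n-r$ and
\[
\left\lfloor\frac{n^3}{2}\right\rfloor=\frac{n-r}{2}\,\bigl(n^2+nr+r^2\bigr).
\]
This is a product of two positive integers that are both greater than $1$ as soon as $(n-r)/2\ge 2$, i.e. for $n\ge 4$; a direct check settles the few smaller indices, leaving the single prime $13$ at $n=3$.

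The case $d=9$ is the substantive one, and the key point is that the square $9=3^2$ splits \emph{evenly} between the two factors. Choosing $r=n\bmod 3\in\{0,1,2\}$ gives $n^3\equiv r^3\pmod 9$ with $r^3<9$, and crucially $3$ divides each factor: $3\mid n-r$ since $r\equiv n\pmod 3$, while $n^2+nr+r^2\equiv 3r^2\equiv 0\pmod 3$. Hence
\[
\left\lfloor\frac{n^3}{9}\right\rfloor=\frac{n-r}{3}\cdot\frac{n^2+nr+r^2}{3},
\]
a genuine product of two positive integers; explicitly this is $3q^3$, $q(3q^2+3q+1)$, and $q(3q^2+6q+4)$ for $n=3q,\,3q+1,\,3q+2$ respectively, composite once $q\ge2$.

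I do not expect a deep obstacle; the only care required is the bookkeeping at small $n$, where the factor $(n-r)/3$ (or $(n-r)/2$) can equal $1$ and the quotient may be a lone prime. This accounts exactly for the primes $3,7,13$ at $n=3,4,5$ in the $d=9$ case. I would therefore phrase the conclusion as ``composite for all $n\ge n_0$'' with an explicit threshold ($n_0=6$ works for both $d$) and dispatch the finitely many smaller indices by inspection. Conceptually, the reason $d=9$ succeeds while a generic $d$ would fail is precisely this alignment of the cubic-residue structure: it forces the prime $3$ into \emph{both} $n-r$ and $n^2+nr+r^2$, so the square factor $3^2$ distributes instead of piling into a single term.
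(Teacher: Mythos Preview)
Your proof is correct and follows essentially the same approach as the paper: identify that the cubic residues modulo $d$ are exactly $\{0,1\}$ for $d=2$ and $\{0,1,8\}$ for $d=9$, write $\lfloor n^3/d\rfloor=(n-r)(n^2+nr+r^2)/d$, and conclude compositeness for large $n$. The only presentational difference is that you explicitly track how $d$ distributes between the two factors (for $d=9$, one $3$ into each), whereas the paper simply observes that both factors exceed $d$ once $n\ge 2d$, which already forces the integer quotient to be composite without needing to locate the divisors.
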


\begin{proof}

For every $n\ge 1$, we have $n^3\equiv 0, 1 \pmod 2$ and $n^3\equiv 0,1,8 \pmod 9$.
Hence, $n^3=Md+r^3$ for some integers $M\ge 0$ and $0\le r^3 < d$.
But then
\begin{equation*}
\left\lfloor\frac{n^3}{d}\right\rfloor= \left\lfloor M+\frac{r^3}{d}\right\rfloor=M=\frac{n^3-r^3}{d}=\frac{(n-r)(n^2+nr+r^2)}{d}.
\end{equation*}

The last quantity in the above equality is a composite integer for all $n\ge 2d$, as for such $n$ we have $n-r>d$ and $n^2+nr+r^2>d$.
Hence, for each $d \in \{ 2, 9\}$, the number $\lfloor n^3/d\rfloor$ is prime for at most finitely many values of $n$, as claimed.
\end{proof}

Before investigating the problem for values $t\ge 4$ several observations are in order.
The first two statements are rather trivial but we record them for future reference.
\begin{remark}\label{obs1}
If $s$ and $t$ are positive integers with $s\,|\, t$ and $\lfloor n^{s}/d\rfloor$ is prime for at most finitely many values of $n$, then the same is true for $\lfloor n^{t}/d\rfloor$.
\end{remark}
This is obvious since $\{\lfloor n^{t}/d\rfloor\}$ is a subsequence of $\{\lfloor n^{s}/d\rfloor\}$.

Next, we generalize the argument used in the proofs of both Theorem \ref{thm2} and Theorem \ref{thm3}.
\begin{lemma}\label{lemma1}
Given integers $t\ge 2$ and $d\ge 2$, let $n\ge 2d$ be an integer with the property that $n^t\equiv r^t \pmod d$ such that $0\le r^t<d$. Then $\lfloor n^t/d\rfloor$ is composite.
\end{lemma}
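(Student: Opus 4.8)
The plan is to generalize directly the computation that proved Theorems \ref{thm2} and \ref{thm3}, making explicit the divisibility bookkeeping those proofs glossed over. Since $n^t \equiv r^t \pmod d$ with $0 \le r^t < d$, I would first write $n^t = Md + r^t$ for a nonnegative integer $M$, so that
\[
\left\lfloor \frac{n^t}{d} \right\rfloor = M = \frac{n^t - r^t}{d} = \frac{(n-r)\,(n^{t-1} + n^{t-2}r + \cdots + r^{t-1})}{d},
\]
using the standard factorization of $n^t - r^t$. Writing $A = n - r$ and $B = n^{t-1} + \cdots + r^{t-1}$ for the two numerator factors, the goal is to exhibit $M = AB/d$ as a product of two integers each at least $2$.

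Next I would record the size estimates. From $0 \le r^t < d$ and $t \ge 2$ it follows that $r < \sqrt d < d$, so the hypothesis $n \ge 2d$ gives $A = n - r > 2d - d = d$. Likewise $B \ge n^{t-1} \ge n \ge 2d > d$, since $t - 1 \ge 1$. Thus both $A > d$ and $B > d$.

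The main obstacle, and the point the earlier proofs left implicit, is that $A > d$ and $B > d$ together with $d \mid AB$ do not by themselves split $M$ into two genuine factors: dividing the product by $d$ could in principle collapse it to a prime. To rule this out I would argue via greatest common divisors. Set $e = \gcd(A, d)$ and write $A = e A'$, $d = e d'$ with $\gcd(A', d') = 1$. From $d \mid AB$ one deduces $d' \mid A' B$, and coprimality forces $d' \mid B$; writing $B = d' B'$ then yields the honest factorization $M = A' B'$. It remains only to check that both factors exceed $1$: we have $A' = A/\gcd(A,d) \ge A/d > 1$ because $A > d$, and $B' = B/d' = Be/d > e \ge 1$ because $B > d$. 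Hence $A', B' \ge 2$, so $M = A'B' \ge 4$ is composite, which completes the argument and isolates the general principle reused throughout the paper.
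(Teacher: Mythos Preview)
Your argument is correct and follows the same route as the paper: write $n^t=Md+r^t$, factor $n^t-r^t=(n-r)\sum_{k=0}^{t-1}n^kr^{t-1-k}$, and observe that both factors exceed $d$. The paper stops there, simply asserting that ``the conclusion follows,'' whereas you supply the gcd bookkeeping (splitting $d$ as $ed'$ with $e=\gcd(n-r,d)$) that actually justifies why $M=AB/d$ with $A,B>d$ must be composite---so your write-up is, if anything, more complete than the original on this point.
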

\begin{proof}

Since $n\ge 2d$, there exists a positive integer $M$ such that $n=Md+r^t$. Then
\begin{equation*}
\left\lfloor\frac{n^t}{d}\right\rfloor= \left\lfloor M+\frac{r^t}{d}\right\rfloor=M=\frac{n^t-r^t}{d}=\frac{(n-r)\sum_{k=0}^{t-1} n^kr^{t-1-k}}{d}.
\end{equation*}

It follows that both factors of the numerator of the last quantity above are greater than $d$. The conclusion follows.
\end{proof}

Finally, note that the set of values of $d$ for which we can prove that $\{\lfloor n^{3}/d\rfloor\}$ is eventually prime-free is considerably smaller than the corresponding set of values of $d$ for which $\{\lfloor n^{2}/d\rfloor\}$ has the same property. We believe this is not accidental; in fact, we venture the following
\begin{conjecture}\label{conjodd}
Let $t$ and $d$ be positive integers, $t$ odd,  $d\ge 2$.

\noindent(a) If $t\equiv 0 \pmod 3$ and $d\neq 2, 9$ then there are infinitely many primes of the form $\lfloor n^t/d\rfloor$.

\noindent(b) If $t\not\equiv 0 \pmod 3$ and $d\neq 2$ then there are infinitely many primes of the form $\lfloor n^t/d\rfloor$.
\end{conjecture}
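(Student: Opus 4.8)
Since this is a conjecture rather than a theorem, I do not expect an unconditional proof to be within reach: producing infinitely many primes in any one of these sequences would settle new cases of Bouniakowsky's conjecture, which the paper itself reports as open even in degree $2$. My plan is therefore to give a \emph{conditional} argument, reducing the conjecture to an algebraic classification and then invoking Bouniakowsky. The first step is to split the sequence by residue class: writing $n=dq+i$ with $0\le i<d$, a binomial expansion gives
\begin{equation*}
\left\lfloor \frac{n^t}{d} \right\rfloor = P_i(q) := \frac{(dq+i)^t-c_i}{d}, \qquad c_i := i^t \bmod d,
\end{equation*}
where $P_i$ is a degree-$t$ polynomial in $q$ with integer coefficients and positive leading coefficient $d^{t-1}$. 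Thus $\{\lfloor n^t/d\rfloor\}$ is the union of the $d$ polynomial subsequences $\{P_i(q)\}_{q\ge 0}$, and it suffices to decide, for each admissible pair $(t,d)$, whether at least one $P_i$ is a Bouniakowsky polynomial.

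The governing dichotomy is reducible versus irreducible $P_i$. When $c_i=r^t$ is a perfect $t$-th power with $r^t<d$, the identity $d\,P_i(q)=(dq+i)^t-r^t$ factors exactly as in Lemma \ref{lemma1}, so $P_i$ is reducible and contributes only finitely many primes; the exceptional pairs $d=2$ (every odd $t$) and $d=9$ (when $3\mid t$, via Remark \ref{obs1} and Theorem \ref{thm3}) are precisely those in which \emph{every} residue $i$ falls into this reducible pattern. For all remaining $(t,d)$ the plan is to exhibit one residue $i$ for which $P_i$ satisfies conditions (a)--(c) of Bouniakowsky; the conclusion then follows by applying Bouniakowsky to that single polynomial, whose values form a subsequence of $\{\lfloor n^t/d\rfloor\}$. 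Irreducibility is controlled by the classical binomial criterion. Since $d\,P_i(q)=g(dq+i)$ with $g(x)=x^t-c_i$, and the affine substitution $x=dq+i$ preserves irreducibility over $\mathbb{Q}$, the polynomial $P_i$ is irreducible exactly when $g$ is. For \emph{odd} $t$ the binomial $x^t-c_i$ is irreducible over $\mathbb{Q}$ if and only if $c_i$ is not a perfect $p$-th power for any prime $p\mid t$ --- and it is here that the hypothesis ``$t$ odd'' does its work, since the extra ``$4\mid t$'' clause of the criterion never intervenes. I would then show that in every non-exceptional case one can choose $i$ with $2\le c_i<d$ and $c_i$ not a perfect $p$-th power for any prime $p\mid t$, treating the subcase $3\mid t$, $d\neq 9$ by ruling out cube values of $c_i$.

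The main obstacle is, unavoidably, the dependence on Bouniakowsky's conjecture: this is a wall that present technology cannot breach, and it is exactly why the statement is offered as a conjecture rather than a theorem. Even granting Bouniakowsky, the conditional reduction retains a genuine difficulty. Irreducibility of $P_i$ is not enough: one must also verify condition (c), that $P_i$ has no fixed prime divisor, and this can fail for an irreducible polynomial, as the example $n^2+n+2$ in the introduction shows. I expect the most delicate part to be proving, uniformly in $(t,d)$, that some residue class $i$ can be chosen so that $P_i$ is \emph{simultaneously} irreducible and admissible in the sense of condition (c); the small values of $d$ and the interaction between the $t$-th-power obstruction modulo $d$ and a possible fixed prime divisor would have to be checked separately.
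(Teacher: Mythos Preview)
The paper offers no proof of this statement: it is presented purely as a conjecture, motivated by the contrast between the $t=2$ and $t=3$ cases, and the authors explicitly say they ``venture'' it. So there is no proof in the paper to compare your attempt against.

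Your conditional reduction to Bouniakowsky is the natural heuristic, and you correctly isolate the algebraic mechanism: splitting by residue class modulo $d$, each subsequence is governed by a polynomial $P_i(q)=((dq+i)^t-c_i)/d$, and irreducibility of $P_i$ over $\mathbb{Q}$ is equivalent (via the affine substitution $x\mapsto dq+i$ and scaling) to irreducibility of $x^t-c_i$, which for odd $t$ is decided by the Vahlen--Capelli criterion. Your identification of the exceptional pairs --- $d=2$ for all odd $t$, and $d=9$ when $3\mid t$ --- as precisely those where every $c_i$ lands on a perfect cube (so every $P_i$ is reducible) is correct and explains the shape of the conjecture's exclusions.

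That said, you are candid that even the conditional argument is incomplete, and both gaps you flag are real. First, the assertion that for every non-exceptional $(t,d)$ some residue $i$ gives a $c_i$ that is not a $p$-th power for any prime $p\mid t$ is plausible but not proved; a counting argument comparing the number of $t$-th-power residues modulo $d$ with the number of perfect $p$-th powers below $d$ should handle large $d$, but small $d$ and the case where $t$ has several prime factors need separate attention. Second, and more delicate, condition~(c) of Bouniakowsky --- no fixed prime divisor --- is not automatic for an irreducible $P_i$, as the paper's own example $n^2+n+2$ shows; you would need to exhibit, uniformly in $(t,d)$, a residue class for which both obstructions vanish simultaneously. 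Neither gap looks insurmountable for a \emph{conditional} result, but neither is filled here, so what you have is a well-motivated outline rather than a proof, conditional or otherwise --- which, given that the paper itself attempts nothing of the sort, is already more than the paper provides.
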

%%%%%%%%%%%%%%%%%%%%%%%%%%%%%%%%%%%%%%%%%%%%%%%%%%%%%%%%%%%%%%%%%%%%%%%%%%

\section{Case \pdfmath{t=4}}

\begin{theorem}\label{thm4}
For every $d\in \{2,3,4,5,8,12,16\} \cup \{24,40\}$ the sequence with general term $\lfloor n^4/d\rfloor$ is eventually prime-free.
\end{theorem}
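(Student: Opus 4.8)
The plan is to split $\{2,3,4,5,8,12,16\}\cup\{24,40\}$ into two parts and treat them by entirely different mechanisms. For the seven values $d\in\{2,3,4,5,8,12,16\}$ there is nothing new to do: since $2\mid 4$ and Theorem \ref{thm2} already shows that $\lfloor n^2/d\rfloor$ is eventually prime-free for each such $d$, Remark \ref{obs1} immediately gives that $\lfloor n^4/d\rfloor$ is eventually prime-free as well. So the entire content of the theorem lies in the two new moduli $d=24$ and $d=40$.

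For these I would first determine the set of fourth-power residues modulo $d$. Writing $24=8\cdot 3$ and $40=8\cdot 5$ and using the Chinese Remainder Theorem together with the elementary facts $n^4\equiv 0,1\pmod 8$ and (by Fermat) $n^4\equiv 0,1\pmod p$ for $p\in\{3,5\}$, I expect to find
\begin{align*}
n^4 &\equiv 0,1,9,16 \pmod{24},\\
n^4 &\equiv 0,1,16,25 \pmod{40}.
\end{align*}
Among these, the residues $0=0^4$, $1=1^4$, $16=2^4$ are genuine fourth powers smaller than $d$, so for every $n\ge 2d$ falling into one of those classes Lemma \ref{lemma1} applies verbatim (with $r=0,1,2$) and forces $\lfloor n^4/d\rfloor$ to be composite. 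This disposes of all but one residue class for each modulus.

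The remaining classes are $n^4\equiv 9\pmod{24}$ and $n^4\equiv 25\pmod{40}$, and here Lemma \ref{lemma1} is unavailable because neither $9$ nor $25$ is a fourth power. Both are instances of $d=8p$ with $p\in\{3,5\}$ and exceptional residue $p^2$, which by the computation above occurs exactly when $n$ is odd and $p\mid n$; that is, $n=pm$ with $m$ odd. For such $n$ I would compute directly
\begin{equation*}
\left\lfloor\frac{n^4}{8p}\right\rfloor=\frac{p^4m^4-p^2}{8p}=p\cdot\frac{p^2m^4-1}{8},
\end{equation*}
noting that $p^2m^4\equiv 1\pmod 8$ (since odd fourth powers are $\equiv 1\pmod 8$), so the second factor is a positive integer. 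Thus $\lfloor n^4/(8p)\rfloor$ is divisible by $p$, and the cofactor exceeds $1$ for all but finitely many $m$, making it composite.

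I expect the only delicate point to be these exceptional classes: they are precisely the residues where the difference-of-fourth-powers factorization underlying Lemma \ref{lemma1} breaks down, and one must instead exhibit the fixed prime divisor $p$ by hand. Everything else is finite residue-class bookkeeping plus citations of Theorem \ref{thm2}, Remark \ref{obs1}, and Lemma \ref{lemma1}; collecting the finitely many small exceptions (such as $n=3$, which gives $\lfloor 3^4/24\rfloor=3$) then yields eventual prime-freeness for all ten values of $d$.
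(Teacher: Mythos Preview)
Your argument is correct, but it takes a longer route than the paper's. For $d\in\{24,40\}$ you invoke Lemma~\ref{lemma1} only on the residues $0,1,16$ (which are genuine fourth powers below $d$) and then treat the leftover residue $p^2$ by a separate divisibility argument, writing $n=pm$ with $m$ odd and exhibiting the fixed prime factor $p$ in $\lfloor n^4/(8p)\rfloor=p\cdot(p^2m^4-1)/8$. The paper instead observes that \emph{every} residue of $n^4$ modulo $24$ or $40$ is a perfect square $r^2<d$ (not necessarily a fourth power), so one may factor uniformly as
\[
\left\lfloor\frac{n^4}{d}\right\rfloor=\frac{n^4-r^2}{d}=\frac{(n^2-r)(n^2+r)}{d},
\]
which is composite for all $n\ge 2d$. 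This single difference-of-squares step covers all four residue classes at once and avoids the case split entirely. Your approach has the merit of staying strictly within the framework of Lemma~\ref{lemma1} plus a ``fixed prime divisor'' fallback---exactly the two mechanisms the paper later uses for harder moduli such as $d=48,80,112$ in Theorem~\ref{thm8}---but for $d=24,40$ the paper's square-residue observation is the cleaner shortcut.
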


\begin{proof}
From Theorem \ref{thm2} we already know that $\{\lfloor{n^2/d}\rfloor\}$ is eventually prime-free if $d$ belongs to the first set of the union above.
Since $\lfloor{n^4/d}\rfloor$ is a subsequence of $\lfloor{n^2/d}\rfloor$ there is nothing left to prove.

The only new cases are $d=24$ and $d=40$.

But it can be easily checked that for every $n\ge 1$, we have
\begin{equation*}
n^4\equiv 0, 1, 9, 16 \!\!\!\!\! \pmod {24} \quad \text{and}\quad  n^4\equiv 0,1,16,25 \!\!\!\!\! \pmod {40}.
\end{equation*}

So, for any $n\ge 2d$ we have that $n^4=Md+r^2$ where $M\ge 2$ and $0\le r^2 <d$.

One can now apply the argument from Lemma \ref{lemma1} to obtain
\begin{equation*}
\left\lfloor\frac{n^4}{d}\right\rfloor= \left\lfloor M+\frac{r^2}{d}\right\rfloor=M=\frac{n^4-r^2}{d}=\frac{(n^2-r)(n^2+r)}{d}.
\end{equation*}
Again, this is composite for all $n\ge 2d$. The proof is complete.
\end{proof}

%%%%%%%%%%%%%%%%%%%%%%%%%%%%%%%%%%%%%%%%%%%%%%%%%%%%%%%%%%%%%%%%%%%%%%%%%%%%%%%%%%%%%%%%%%%%%%%%%%%%%%%%%%%%%%%%%%%%%%%%%%%%%

\section{Case \pdfmath{t=6}}

\begin{theorem}\label{thm6}
For every $d\in \{2,3,4,5,8,9,12,16\} \cup\{7,24,56,72\}$ the sequence  $\{\lfloor n^6/d\rfloor\}_n$ is eventually prime-free.
\end{theorem}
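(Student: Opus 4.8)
The plan is to separate the old moduli $\{2,3,4,5,8,9,12,16\}$ from the genuinely new ones $\{7,24,56,72\}$. For the first set nothing new is required: since $2\mid 6$, the sequence $\{\lfloor n^6/d\rfloor\}$ is a subsequence of $\{\lfloor n^2/d\rfloor\}$, so Theorem~\ref{thm2} together with the subsequence remark following Theorem~\ref{thm3} already disposes of $d\in\{2,3,4,5,8,12,16\}$; and since $3\mid 6$, the case $d=9$ follows from Theorem~\ref{thm3} in exactly the same way. Thus only $d\in\{7,24,56,72\}$ need direct treatment.

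For the new moduli the first step is to record the residues of $n^6$. Using Fermat's little theorem and the Chinese Remainder Theorem (writing $24=8\cdot3$, $56=8\cdot7$, $72=8\cdot9$), one checks that for every $n$,
\begin{align*}
&n^6\equiv 0,1 \pmod 7,\qquad n^6\equiv 0,1,9,16\pmod{24},\\
&n^6\equiv 0,1,8,49\pmod{56},\qquad n^6\equiv 0,1,9,64\pmod{72}.
\end{align*}
The decisive observation is that each residue $c$ satisfies $0\le c<d$ and is either a perfect square or a perfect cube: the values $0,1,9,16,49,64$ are the squares $0^2,1^2,3^2,4^2,7^2,8^2$, while the lone exception $8\pmod{56}$ is the cube $2^3$.

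The key step is a factorization adapted to the exponent $6=2\cdot3$, extending the device already used for $t=4$. If the residue is a square $c=r^2$, I would write $n^6-r^2=(n^3)^2-r^2=(n^3-r)(n^3+r)$; if it is the cube $c=2^3$ (the case $8\pmod{56}$), I would instead write $n^6-2^3=(n^2)^3-2^3=(n^2-2)(n^4+2n^2+4)$. In either case $n^6-c=u(n)v(n)$ with $d\mid u(n)v(n)$ and, for $n\ge 2d$, both $u(n)>d$ and $v(n)>d$. To conclude compositeness I would split the divisor as $d=d_1d_2$ with $d_1\mid u(n)$ and $d_2\mid v(n)$, which is always possible prime-power by prime-power because $d\mid u(n)v(n)$, so that
\begin{equation*}
\left\lfloor\frac{n^6}{d}\right\rfloor=\frac{u(n)v(n)}{d}=\frac{u(n)}{d_1}\cdot\frac{v(n)}{d_2},
\end{equation*}
a product of two integers each exceeding $1$, hence composite for all $n\ge 2d$.

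The main obstacle is that Lemma~\ref{lemma1} as stated does not apply: it requires the residue to be an exact sixth power, whereas residues such as $9,16,49$ and especially $8\pmod{56}$ are not sixth powers. The real work therefore lies in (i) verifying the residue lists above and, crucially, that each representative is a perfect square or cube below $d$, and (ii) supplying the square/cube factorizations and the splitting-of-$d$ argument that replace the single factorization of Lemma~\ref{lemma1}. I expect the residue $8\pmod{56}$ to be the one subtle point, since it is the sole residue that forces the cubic factorization rather than the quadratic one.
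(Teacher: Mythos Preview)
Your proposal is correct and follows essentially the same route as the paper: reduce the first eight moduli to Theorems~\ref{thm2} and~\ref{thm3} via the subsequence remark, then for $d\in\{7,24,56,72\}$ observe that every residue of $n^6$ modulo $d$ is a perfect square or cube below $d$ and factor $n^6-r^2=(n^3-r)(n^3+r)$ or $n^6-r^3=(n^2-r)(n^4+n^2r+r^2)$. Your residue list $\{0,1,9,16\}$ for $d=24$ is in fact the correct one (the paper's ``$25$'' is a slip), and your explicit splitting $d=d_1d_2$ with $d_1\mid u(n)$, $d_2\mid v(n)$ is a welcome clarification of a step the paper leaves implicit.
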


\begin{proof}

Since $\{\lfloor n^6/d\rfloor\}_n$ is a subsequence of both $\{\lfloor n^2/d\rfloor\}_n$ and $\{\lfloor n^3/d\rfloor\}_n$, the first set of values of $d$ is covered by Theorem \ref{thm2} and by Theorem \ref{thm3}.

The cases $d \in \{ 7,24,56,72 \}$ can also be easily handled as it can be checked that for every integer $n$, we have
$n^6\equiv 0, 1 \pmod {7}$, $n^6\equiv 0,1,16,25 \pmod {24}$,
$n^6\equiv 0, 1, 8, 49 \pmod {56}$, and  $n^6\equiv 0,1,9,64 \pmod {72}$.

Hence, for any $n$ we either have $n^6\equiv r^2 \pmod d$ or $n^6 \equiv r^3 \pmod d$ with $0\le r^2<d$ or $0\le r^3 <d$, respectively.

Then, as in the proofs of Theorem \ref{thm3} and Theorem \ref{thm4}, we have
\begin{align*}
&\left\lfloor\frac{n^6}{d}\right\rfloor= \frac{n^6-r^2}{d}=\frac{(n^3-r)(n^3+r)}{d}\,\, \text{or} \\ &\left\lfloor\frac{n^6}{d}\right\rfloor= \frac{n^6-r^3}{d}=\frac{(n^2-r)(n^4+n^2r+r^2)}{d}.
\end{align*}
Either way, $\left\lfloor n^6/d\right\rfloor$ is certainly composite if $n\ge 2d$.

\end{proof}

%%%%%%%%%%%%%%%%%%%%%%%%%%%%%%%%%%%%%%%%%%%%%%%%%%%%%%%%%%%%%%%%%%%%%%%%%%%%%%%%%%%%%%%%%%%%%%%%%%%%%%%%%%%%%%%%%%%%%%%%%%%%%

\section{Case \pdfmath{t=8}}

\begin{theorem}\label{thm8}
For every $d \in \{2,3,4,5,8,12,16,24,40\} \cup \{17, 32, 48, 80, 112\}$ the sequence $\{\lfloor n^8/d\rfloor\}$ is eventually prime-free.
\end{theorem}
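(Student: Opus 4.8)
The plan is to split the union in the statement. For every $d$ in $\{2,3,4,5,8,12,16,24,40\}$ the sequence $\{\lfloor n^4/d\rfloor\}$ is eventually prime-free by Theorem \ref{thm4}, and since $4\mid 8$ the sequence $\{\lfloor n^8/d\rfloor\}$ is one of its subsequences (Remark \ref{obs1}); so these nine values require no further work, and the real content lies in the five new moduli $d\in\{17,32,48,80,112\}$.

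For each such $d$ I would first tabulate the residues of $n^8$ modulo $d$, exactly as in the proofs of Theorems \ref{thm4} and \ref{thm6}. Note that $17$ is prime with $8\mid\phi(17)=16$, that $32=2^5$, and that $48,80,112$ all have the shape $2^4p$ with $p\in\{3,5,7\}$; in each case a short computation (Fermat's little theorem for the odd prime part together with $n^8\equiv 0\pmod{32}$ for $n$ even and $n^8\equiv 1\pmod{32}$ for $n$ odd) yields a small residue set. For $d=17$ and $d=32$ every residue turns out to be a perfect square below $d$, and then the argument of Lemma \ref{lemma1} applied to $N=n^4$ finishes the job: writing the residue as $r^2$ gives $\lfloor n^8/d\rfloor=(n^8-r^2)/d=(n^4-r)(n^4+r)/d$, a product of two integers each exceeding $1$ once $n\ge 2d$, hence composite.

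The difficulty, and the genuinely new ingredient compared with the earlier even-exponent theorems, appears for $d\in\{48,80,112\}$: some residues of $n^8\bmod d$ are not perfect powers (for instance $33$ when $d=48$, $65$ when $d=80$, and both $32$ and $65$ when $d=112$), so no algebraic factorisation of $n^8-r^2$ is available. My plan is to dispatch these by a parity argument: recompute the residues of $n^8$ modulo $2d$ instead of modulo $d$, and verify that each offending residue $c<d$ persists as a residue $c<d$ modulo $2d$. Whenever $n^8\equiv c\pmod{2d}$ with $0\le c<d$, we obtain $\lfloor n^8/d\rfloor=(n^8-c)/d=2\bigl((n^8-c)/2d\bigr)$, an even number, hence composite as soon as it exceeds $2$, i.e. for all large $n$.

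In fact these two mechanisms package into one criterion that I would make the backbone of the proof: setting $\rho:=n^8\bmod 2d$, if $\rho<d$ then $\lfloor n^8/d\rfloor$ is even, whereas if $d\le\rho<2d$ then $\lfloor n^8/d\rfloor=(n^8-(\rho-d))/d$ is odd and one checks that $\rho-d$ is a perfect square, so the factorisation $(n^4-\sqrt{\rho-d})(n^4+\sqrt{\rho-d})/d$ applies. The whole theorem then reduces to the finite verification that, for each of the five new moduli, every residue of $n^8$ lying in $[d,2d)$ modulo $2d$ exceeds $d$ by a perfect square. The main obstacle is precisely identifying this auxiliary modulus: the perfect-power trick of Lemma \ref{lemma1} alone fails on the non-square residues, and the key observation is that replacing $d$ by $2d$ forces exactly those troublesome quotients to be even, which is what makes the remaining cases tractable.
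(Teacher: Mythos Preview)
Your proposal is correct and follows the same overall skeleton as the paper: reduce the first nine values to Theorem~\ref{thm4} via Remark~\ref{obs1}, handle $d=17$ and $d=32$ by observing that every residue of $n^8$ modulo $d$ is a perfect square below $d$ so that Lemma~\ref{lemma1} applies to $N=n^4$, and for $d\in\{48,80,112\}$ show that the residue classes producing a non-square remainder give an even value of $\lfloor n^8/d\rfloor$.

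The only genuine difference is in how the parity step is carried out. The paper treats each offending residue class separately, writes $n=km+r$ for a suitable period $k$, expands $(km+r)^8/d$ binomially, and checks that every surviving term is even. You instead pass once to the modulus $2d$ and observe that the troublesome residues all lie in $[0,d)$ modulo $2d$, which immediately forces $\lfloor n^8/d\rfloor=2\cdot(n^8-c)/(2d)$ to be even; the remaining residues in $[d,2d)$ then have $\rho-d$ a perfect square, so the difference-of-squares factorisation applies. This is a tidier and more uniform packaging of exactly the same parity idea: the finite check ``every residue of $n^8$ in $[d,2d)$ modulo $2d$ exceeds $d$ by a perfect square'' replaces the paper's several binomial expansions, and it is straightforward to verify for all five new moduli (for instance, modulo $2\cdot112=224$ the residues are $\{0,1,32,64,65,128,161,193\}$, and $128-112=16$, $161-112=49$, $193-112=81$). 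Either route yields the theorem with the same amount of finite computation.
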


\begin{proof}
Since  $\{\lfloor n^8/d\rfloor\}_n$ is a subsequence of $\{\lfloor n^4/d\rfloor\}_n$, the first set of values of $d$ is covered by Theorem \ref{thm4}.

The cases $d=17$ and $d=32$ are also easy since for every integer $n$ we have
$n^8\equiv 0, 1, 16 \pmod {17}$ and $n^8\equiv 0, 1 \pmod {32}$. As all these remainders are perfect squares, the same argument as in Theorem \ref{thm4} can be applied.

However, for each $d\in \{48, 80, 112\}$ there exist values of $n$ such that $n^8 \pmod d$ is not a perfect square. Indeed, one can see that
\begin{align*}
&n^8\equiv 0,1,16,{\bf 33} \pmod {48}, \\
&n^8\equiv 0,1,16, {\bf 65} \pmod {80},\,\, \text{and}\\
&n^8\equiv 0,1,16, {\bf 32},49,64, {\bf 65}, 81 \pmod {112}.
\end{align*}

Nevertheless, we claim that Theorem \ref{thm8} holds even for these values of $d$.
We consider each case separately.

Suppose first that $d=48$.
Then, if $n\not\equiv 3 \pmod 6$ we have that $n^8\equiv 0, 1, 16 \pmod {48}$ and therefore $\lfloor n^8/48\rfloor$ is composite for all sufficiently large $n$.

It remains to deal with the case $n=6m+3$. Expanding $n^8/48$ we obtain

\begin{equation*}
\frac{n^8}{48}= \sum_{k=0} ^8 {8 \choose {k}} 2^{4-k}3^7m^{8-k}=\sum_{k=0}^6 {8 \choose {k}}. 2^{4-k}3^7m^{8-k}+3^7m(7m+1)+\frac{3^7}{16}.
\end{equation*}

One can readily check that every coefficient of the first sum is an even integer, and so is the term $3^7m(7m+1)$. Finally, $\lfloor 3^7/16\rfloor =136$, hence $\lfloor n^8/48 \rfloor$ is necessarily even when $n\equiv 3\pmod 6$, and therefore composite.

We encounter a similar situation when $d=80$.
Then, if $n\not\equiv 5 \pmod {10}$ we have that $n^8\equiv 0, 1, 16 \pmod {80}$ and therefore $\lfloor n^8/80\rfloor$ is composite for all sufficiently large $n$.

It remains to deal with the case $n=10m+5$. Expanding $n^8/80$ we obtain
\begin{equation*}
\frac{n^8}{80}= \sum_{k=0}^8 {{8} \choose {k}}2^{4-k}5^7m^{8-k}=\sum_{k=0}^6 {{8} \choose {k}}2^{4-k}5^7m^{8-k}+5^7m(7m+1)+\frac{5^7}{16}
\end{equation*}
We use again the fact that every coefficient of the first sum is an even integer, and so is the term $5^7m(7m+1)$. Finally, $\lfloor 5^7/16\rfloor =4882$, hence $\lfloor n^8/80 \rfloor$ is necessarily even when $n\equiv 5\pmod {10}$, and therefore composite.

Finally, suppose that $d=112$.

If $n\not\equiv \pm2, \pm3 \pmod {14}$ then $n^8\equiv 0, 1, 16, 49, 64, 81 \pmod {112}$ and therefore $\lfloor n^8/112\rfloor$ is composite for all sufficiently large $n$.

If $n\equiv\pm 2 \pmod{14}$ then $n^8 \equiv 32 \pmod {112}$ and since $32$ is not a perfect square we need a different argument.
Assume that $n=14m+2\epsilon$ where $\epsilon=\pm 1$. Expanding we get
\begin{equation*}
\frac{n^8}{112}= \frac{16}{7}(7m+\epsilon)^8=16\sum_{k=0}^7 {{8} \choose {k}}7^{7-k}m^{8-k}\epsilon^k+\frac{16}{7}.
\end{equation*}
The first sum is an even integer; since $\lfloor 16/7\rfloor =2$ it follows that $\lfloor n^8/112\rfloor$ is even for all $n\equiv \pm2 \pmod {14}$
and therefore composite.

A very similar argument can be used if $n\equiv\pm 3 \pmod{14}$;  in this case then $n^8 \equiv 65 \pmod {112}$.
Assume that $n=14m+3\epsilon$ where $\epsilon=\pm 1$. Expanding we get
\begin{equation*}
\frac{n^8}{112}= (14m+3\epsilon)^8=\sum_{k=0}^5 {{8} \choose {k}}2^{4-k}3^k7^{7-k}m^{8-k}\epsilon^k+ 3^6m\epsilon(49m+3\epsilon)+\frac{3^8}{112}.
\end{equation*}
All coefficients of the first sum are even integers, and so is the second term. Also, $\lfloor 3^8/112\rfloor=58\equiv 0 \pmod 2$, so $\lfloor n^8/112\rfloor$ is even for all $n\equiv \pm3 \pmod {14}$. This completes the proof.
\end{proof}

%%%%%%%%%%%%%%%%%%%%%%%%%%%%%%%%%%%%%%%%%%%%%%%%%%%%%%%%%%%%%%%%%%%%%%%%%%%%%%%%%%%%%%%%%%%%%%%%%%%%%%%%%%%%%%%%%%%%%%%%%%%%%
\section{Case \pdfmath{d=24}}

Until now, for a given even integer $t$ we tried to discover the values of $d$ which make the sequence $\{\lfloor n^t/d\rfloor\}_n$ eventually prime-free.
We pause this search for now in order to present a value of $d$ which works for all even $t\ge 4$.

\begin{theorem}\label{thmd24}
For every even $t \ge 4$ the sequence $\{\lfloor n^t/24\rfloor\}$ is eventually prime-free.
\end{theorem}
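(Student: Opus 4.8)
The plan is to reduce the statement to the factorization device already used in Theorem~\ref{thm4} and Lemma~\ref{lemma1}, by proving the following uniform residue claim: for \emph{every} even $t\ge 4$ and every integer $n$, the residue $n^t \bmod 24$ lies in $\{0,1,9,16\}$, and in particular is always a perfect square strictly less than $24$. Granting this, for each $n$ I write $n^t\equiv r^2 \pmod{24}$ with $r\in\{0,1,3,4\}$ and $0\le r^2<24$, and then
\begin{equation*}
\left\lfloor\frac{n^t}{24}\right\rfloor=\frac{n^t-r^2}{24}=\frac{\bigl(n^{t/2}-r\bigr)\bigl(n^{t/2}+r\bigr)}{24}.
\end{equation*}
For $n\ge 48=2\cdot 24$ both factors $n^{t/2}\mp r$ exceed $24$, so the quotient is composite exactly as in Theorem~\ref{thm4}; thus the sequence is prime-free for all $n\ge 48$, which is the desired eventual prime-freeness.

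The core of the argument is therefore the residue computation, which I would carry out once and for all in $t$ using the Chinese Remainder Theorem with $24=8\cdot 3$. Modulo $3$: if $3\mid n$ then $n^t\equiv 0$, and otherwise $n^2\equiv 1$ gives $n^t=(n^2)^{t/2}\equiv 1$, so $n^t\equiv 0,1 \pmod 3$. Modulo $8$: if $n$ is odd then $n^2\equiv 1$ forces $n^t=(n^2)^{t/2}\equiv 1$, while if $n$ is even, writing $n=2m$ gives $n^t=2^t m^t\equiv 0 \pmod 8$ since $t\ge 3$; hence $n^t\equiv 0,1 \pmod 8$. Recombining these two pairs of residues via CRT yields precisely $n^t\bmod 24\in\{0,1,9,16\}$, and each of these is $0^2,1^2,3^2,4^2$ respectively. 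Note this is a single computation valid simultaneously for all even $t\ge 4$, not a separate case check for each exponent.

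I expect the only genuinely delicate point to be the hypothesis $t\ge 4$, and it is precisely the modulo $8$ step that isolates its role. For $t=2$ an even $n\equiv 2\pmod 4$ gives $n^2\equiv 4\pmod 8$, producing the extra residue $12\bmod 24$, which is not a perfect square; this is exactly why $d=24$ does not appear in Theorem~\ref{thm2}, and it is the obstruction one must rule out. The condition $t\ge 3$, automatically met by even $t\ge 4$, is what collapses the even-$n$ case to $n^t\equiv 0\pmod 8$ and removes the value $4$ (hence $12$) from the residue set. I would emphasize this uniformity as the substantive feature of the proof, since it is what allows a single value $d=24$ to defeat all even exponents at once rather than requiring a new argument for each $t$.
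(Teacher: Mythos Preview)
Your proof is correct and follows essentially the same approach as the paper: establish that $n^t\bmod 24\in\{0,1,9,16\}$ for all even $t\ge 4$, then apply the difference-of-squares factorization from Lemma~\ref{lemma1}. The only cosmetic difference is that the paper computes the residues by casework on $n\bmod 6$ (handling $n\equiv 0\pmod 6$ by direct divisibility) whereas you use the Chinese Remainder Theorem with $24=8\cdot 3$, which is arguably cleaner and makes the role of the hypothesis $t\ge 4$ more transparent.
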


\begin{proof}
Suppose that $t=2s$ where $s\ge 2$.
We distinguish a few cases depending on the remainder of $n$ modulo $6$.

If $n\equiv \pm 1 \pmod 6$ then $n^2\equiv 1 \pmod{24}$, from which $n^{2s}\equiv 1 \pmod{24}$ for all $s\ge 1$. But this implies that
\begin{equation*}
\left\lfloor\frac{n^t}{24}\right\rfloor= \frac{n^{2s}-1}{24}=\frac{(n^s-1)(n^s+1)}{24}
\end{equation*}
and this is certainly composite for all large enough $n$.

If $n\equiv \pm 2 \pmod 6$ then $n^2\equiv 4, 16 \pmod{24}$, from which $n^{2s}\equiv 16 \pmod{24}$ for all $s\ge 2$. But this implies that
\begin{equation*}
\left\lfloor\frac{n^t}{24}\right\rfloor= \frac{n^{2s}-16}{24}=\frac{(n^s-4)(n^s+4)}{24}
\end{equation*}
and again this is composite for all sufficiently large $n$.

If $n\equiv 3 \pmod 6$ then $n^2\equiv 9 \pmod{24}$, from which $n^{2s}\equiv 9 \pmod{24}$ for all $s\ge 1$. This implies that
\begin{equation*}
\left\lfloor\frac{n^t}{24}\right\rfloor= \frac{n^{2s}-9}{24}=\frac{(n^s-3)(n^s+3)}{24}.
\end{equation*}
The same conclusion as in the previous cases follows.

Finally, assume that $n\equiv 0 \pmod 6$. Then $n=6m$ for some positive integer $n$. It follows that
\begin{equation*}
\left\lfloor\frac{n^t}{24}\right\rfloor= \frac{2^t3^tm^t}{2^3\cdot 3}=2^{t-3}3^{t-1}m^t,
\end{equation*}
and this is certainly a composite integer since $t\ge 4$.
\end{proof}
%%%%%%%%%%%%%%%%%%%%%%%%%%%%%%%%%%%%%%%%%%%%%%%%%%%%%%%%%%%%%%%%%%%%%%%%%%%%%%
\section{Case \pdfmath{t=10}}

\begin{theorem}\label{thm10}
For every $d \in \{2,3,4,5,8,12,16\} \cup \{11,24,40\}$ the sequence with general term $\lfloor n^{10}/d\rfloor$ is eventually prime-free.
\end{theorem}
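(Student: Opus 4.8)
The plan is to handle the three blocks of admissible $d$ separately, pushing as much as possible onto results already in hand. Since $2\mid 10$, the sequence $\{\lfloor n^{10}/d\rfloor\}$ is a subsequence of $\{\lfloor n^{2}/d\rfloor\}$, so Remark \ref{obs1} together with Theorem \ref{thm2} disposes at once of every $d$ in the first set $\{2,3,4,5,8,12,16\}$. The modulus $d=24$ is covered by Theorem \ref{thmd24}, since $t=10$ is even and $\ge 4$. This leaves only the two genuinely new moduli $d=11$ and $d=40$.

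For $d=11$ I would invoke Fermat's Little Theorem: for every $n$ one has $n^{10}\equiv 0$ or $1\pmod{11}$, and both residues are perfect squares. Thus $n^{10}\equiv r^{2}\pmod{11}$ with $r\in\{0,1\}$, and the identity $\lfloor n^{10}/11\rfloor=(n^{10}-r^{2})/11=(n^{5}-r)(n^{5}+r)/11$ exhibits $\lfloor n^{10}/11\rfloor$ as composite for all large $n$, exactly as in the treatment of $d=17$ in Theorem \ref{thm8}.

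For $d=40$ I would first determine $n^{10}\bmod 40$ via the Chinese Remainder Theorem on $40=8\cdot 5$: parity handles the modulus-$8$ part (odd $n$ give $1$, even $n$ give $0$) and Fermat handles the modulus-$5$ part ($n^{10}\equiv n^{2}\equiv 0,1,4\pmod 5$). Combining, one finds $n^{10}\equiv 0,1,9,16,24,25\pmod{40}$. The five residues $0,1,9,16,25$ are perfect squares, so for the values of $n$ producing them the factoring argument of Theorem \ref{thm4} again finishes the job. The single obstruction is the residue $24$, which is not a perfect square and occurs precisely when $n\equiv\pm2\pmod{10}$.

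This residue-$24$ case is the crux, and it calls for a divisibility argument in the spirit of the $d\in\{48,80,112\}$ cases of Theorem \ref{thm8} — but with a twist: the correct fixed divisor turns out to be $5$, not $2$, and the naive ``show it is even'' strategy of the $t=8$ analysis fails here. Writing $n=2p$ with $p\equiv\pm1\pmod 5$, I would show $p^{10}\equiv 1\pmod{25}$: when $p\equiv 1\pmod 5$ the element $p$ lies in the order-$5$ subgroup of $(\mathbb{Z}/25\mathbb{Z})^{\times}$, so $p^{5}\equiv 1$ and a fortiori $p^{10}\equiv 1\pmod{25}$, while when $p\equiv -1\pmod 5$ one uses $p^{10}=(-p)^{10}$ with $-p\equiv 1\pmod 5$ to reduce to the previous case. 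Then $n^{10}/40=128\,p^{10}/5$, and substituting $p^{10}=25u+1$ gives $\lfloor n^{10}/40\rfloor=640u+25=5(128u+5)$, divisible by $5$ and hence composite for all sufficiently large $n$. I expect the main obstacle to be this final step — recognizing that the right modulus to work with is $25$ (not merely $5$) so that the exponent-$10$ power collapses to $1$, and thereby identifying $5$ as the guaranteed divisor.
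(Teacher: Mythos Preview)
Your proof is correct and follows the same overall architecture as the paper: the first set of moduli via the $2\mid 10$ subsequence argument and Theorem~\ref{thm2}, the case $d=24$ via Theorem~\ref{thmd24}, the case $d=11$ via Fermat, and for $d=40$ the split into the perfect-square residues $\{0,1,9,16,25\}$ versus the exceptional residue $24$ coming from $n\equiv\pm2\pmod{10}$.

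The only difference is in how you dispatch that exceptional case. The paper writes $n=10m+2\epsilon$ and expands $(10m+2\epsilon)^{10}/40$ binomially, observing that every term with $k\le 9$ is an integer multiple of $5$ and that $\lfloor 2^{10}/40\rfloor=25$ is as well. You instead write $n=2p$ with $p\equiv\pm1\pmod 5$, use the structure of $(\mathbb Z/25\mathbb Z)^\times$ to get $p^{10}\equiv 1\pmod{25}$, and then read off $\lfloor n^{10}/40\rfloor=5(128u+5)$ directly. Both arguments land on the same conclusion (divisibility by $5$); the paper's binomial expansion is more in keeping with the computations elsewhere in the manuscript, while your group-theoretic shortcut is cleaner and explains \emph{why} the modulus $25$ is the right one to look at rather than discovering it after the fact.
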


\begin{proof}
The values of $d$ in the first set are covered by Theorem \ref{thm2} since  $\{\lfloor n^{10}/d\rfloor\}_n$ is a subsequence of $\{\lfloor n^2/d\rfloor\}_n$.
The case $d=24$ is a consequence of Theorem \ref{thmd24}.

The case $d=11$ is an immediate consequence of Fermat's Little Theorem since in this case $n^{10}\equiv 0, 1 \pmod{11}$ for all integers $n$.

Finally, if $d=40$ and $n\not\equiv \pm 2 \pmod {10}$ then $n^{10}\equiv 0, 1, 9, 16, 25 \pmod{40}$ and the conclusion easily follows.
It remains to look at the situation when $n\equiv \pm 2 \pmod{10}$, as in this case $n^{10}\equiv 24 \pmod{40}$.

Assume that $n=10m+2\epsilon$ where $\epsilon=\pm 1$. Expanding we obtain
\begin{equation*}
\frac{n^{10}}{40}= \frac{(10m+2\epsilon)^{10}}{40}=\sum_{k=0}^9 {{10} \choose {k}}2^7 5^{9-k} m^{10-k}\epsilon^k+ \frac{2^{10}}{40}.
\end{equation*}
It is immediate that all coefficients of the first sum are integers multiples of $5$, while $\lfloor 2^{10}/40\rfloor=25\equiv 0 \pmod 5$, so $\lfloor n^{10}/40\rfloor$ is an integer multiple of $5$ for all $n\equiv \pm2 \pmod {10}$. This completes the proof.

\end{proof}

%%%%%%%%%%%%%%%%%%%%%%%%%%%%%%%%%%%%%%%%%%%%%%%%%%%%%%%%%%%%%%%%%%%%%%%%%%%%%%
\section{Cases \pdfmath{12\le t \le 54}}

One can continue the search for pairs $(t,d)$ for which the sequence with general term $\lfloor n^{s}/d\rfloor$ is eventually prime-free
for larger values of $t$.
In Table \ref{table1} we tabulated our findings for the entire range $2\le t\le 54$. In boldface we present what \emph{we believe} to be the primitive pairs $(t,d)$ with the desired property.

We say a pair $(t,d)$ is \emph{primitive} if $\lfloor n^{t}/d\rfloor$ is eventually prime-free but there is no proper divisor $s$ of $t$ for which $\lfloor n^{s}/d\rfloor$ has the same property.

For instance, we are inclined to think that the pair $(t,d)=(6,7)$ is primitive because we can easily show (via Fermat's Little Theorem) that there are only finitely many primes of the form $\lfloor n^6/7\rfloor$. However, one still has to show that there are infinitely many primes of the form $\lfloor n^2/7\rfloor$ as well as infinitely many primes of the form $\lfloor n^3/7\rfloor$.

On one hand, disproving such a statement is equivalent to finding a counterexample to Bouniakowsky's conjecture. On the other hand, a proof seems to be equally challenging as we are not aware of a single instance of a polynomial of degree $\ge 2$ for which the conclusion of Bouniakowsky's conjecture is satisfied.

In conclusion, deciding whether a pair $(t,d)$ is primitive or not is currently beyond anyone's abilities.

In light of Conjecture \ref{conjodd}, we do not list any odd values of $t$ other than $t=3$. Also, some even values of $t$ (such as $t=14, 26, 34, 38, 44, 46$) are also missing from Table \ref{table1}; this is because we were unable to identify any primitive pairs for these particular values of $t$.

Proving that each pair appearing in Table \ref{table1} has indeed the desired property is relatively straightforward; all proofs follow the model presented in the previous sections. Broadly speaking, for a given pair $(t,d)$ and for a given congruence class $n \pmod d$, this subsequence of $\{n^t/d\}_n$ behaves in one of two ways:
\begin{itemize}
\item{The remainder of $n^t$ modulo $d$ is a perfect power whose exponent shares a nontrivial common factor with $t$ itself (in most cases the remainder is a perfect square). In this case, $d\lfloor n^t/d\rfloor$ can be written as a difference of like powers, that is, as the product of two polynomials in $n$; as a result, $\lfloor n^t/d\rfloor$ is composite for all sufficiently large $n$.}
\item{The remainder of $n^t$ modulo $d$ is not perfect power, case in which we try to prove that all numbers of the form $\lfloor n^t/d\rfloor$ (for $n$ belonging to that particular congruence class modulo $d$) are divisible by a common prime number. The proof always reduces to checking only finitely many values of $n$.}
\end{itemize}

We illustrate this process by proving the following
\begin{theorem}
There are only finitely many primes of the form $\lfloor n^{30}/1116\rfloor$.
\end{theorem}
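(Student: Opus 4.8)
The plan is to exploit the factorization $1116 = 2^2 \cdot 3^2 \cdot 31$ together with the fact that $30 = 31 - 1$. First I would determine every possible residue of $n^{30}$ modulo $1116$. Modulo $4$ one has $n^{30} = (n^2)^{15} \equiv 0, 1$; modulo $9$ the unit group has order $6$, which divides $30$, so $n^{30} \equiv 0, 1$; and modulo $31$ Fermat's Little Theorem gives $n^{30} \equiv 0, 1$. By the Chinese Remainder Theorem, $n^{30} \bmod 1116$ is therefore governed by a triple in $\{0,1\}^3$, and a short computation shows that the only attainable values are
\begin{equation*}
n^{30} \equiv 0,\ 1,\ 217,\ 280,\ 496,\ 621,\ 837,\ 900 \pmod{1116}.
\end{equation*}

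Among these eight residues, three are perfect squares, namely $0 = 0^2$, $1 = 1^2$, and $900 = 30^2$. For an $n$ in one of these classes I would write $\lfloor n^{30}/1116\rfloor = (n^{30} - r^2)/1116 = (n^{15} - r)(n^{15} + r)/1116$ with $r \in \{0, 1, 30\}$ and conclude compositeness for all large $n$ exactly as in the proof of Theorem \ref{thm4}. The remaining five residues $217, 280, 496, 621, 837$ are not perfect powers, so no factorization is available; for these the second mechanism must be used, namely showing that $\lfloor n^{30}/1116\rfloor$ is divisible by a fixed small prime.

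The observation driving this second mechanism is that each non-square residue is divisible by a prime power matched to the arithmetic of the class that produces it. Writing $V = \lfloor n^{30}/1116\rfloor = (n^{30} - \rho)/1116$ and noting that $v_2(1116) = v_3(1116) = 2$, it suffices to show either $8 \mid n^{30} - \rho$ (which forces $2 \mid V$) or $27 \mid n^{30} - \rho$ (which forces $3 \mid V$). For the residues $280, 496$ (which arise from even $n$) and $217$ (which arises from odd $n$ with $3 \nmid n$) I would use the first option: even $n$ gives $n^{30} \equiv 0 \pmod 8$ while $8 \mid 280$ and $8 \mid 496$, whereas odd $n$ gives $n^{30} \equiv 1 \pmod 8$ while $217 \equiv 1 \pmod 8$. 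For $621$ and $837$, which both arise from $n$ divisible by $3$, the mod-$8$ test fails ($621, 837 \equiv 5 \pmod 8$), but $3 \mid n$ gives $n^{30} \equiv 0 \pmod{27}$, and since $621 = 27 \cdot 23$ and $837 = 27 \cdot 31$ are divisible by $27$, the second option applies. In every case $V$ is a multiple of $2$ or of $3$ that tends to infinity, hence composite for all sufficiently large $n$.

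I expect the main difficulty to be bookkeeping rather than conceptual: one must run the CRT computation to pin down the eight residues, correctly attach each non-square residue to the congruence class of $n$ producing it, and then verify the divisibility coincidences ($8 \mid \rho$ or $\rho \equiv 1 \pmod 8$ in the even and odd-unit cases, and $27 \mid \rho$ in the two cases with $3 \mid n$). The genuine crux is recognizing that the prime $2$ cannot settle $621$ and $837$, so that one is forced onto the prime $3$ precisely for the two residues whose defining class guarantees $3 \mid n$; confirming that this dichotomy exhausts all five non-square residues is what makes the argument go through.
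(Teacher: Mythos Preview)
Your argument is correct and shares the paper's two-pronged skeleton: perfect-square residues are disposed of by a difference-of-squares factorization, and the remaining residues by exhibiting a fixed small prime divisor of $\lfloor n^{30}/1116\rfloor$. The execution of the second prong, however, is genuinely different. The paper splits on $n\bmod 6$: for $6\mid n$ the residue is $0$ or $900$, while for $n\not\equiv 0\pmod 6$ it writes $n=186m+r$, expands $(186m+r)^{30}/1116$ binomially, reduces modulo $6$, and then appeals to a finite numerical check of $\lfloor r^{30}/1116\rfloor\bmod 6$ over all $0<r<186$ with $6\nmid r$. Your valuation argument---lifting from $n\bmod 2$ or $n\bmod 3$ to $n^{30}\bmod 8$ or $n^{30}\bmod 27$ and comparing directly with $\rho$---reaches the identical divisibility conclusions without any binomial expansion or computer search, and in fact explains \emph{why} the paper's finite check succeeds: the CRT coordinate of $\rho$ recording whether $2\mid n$ (resp.\ $3\mid n$) already forces $8\mid n^{30}-\rho$ (resp.\ $27\mid n^{30}-\rho$). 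What the paper's more mechanical method buys is uniformity: it ports to the other entries of Table~\ref{table1} without demanding a fresh structural observation for each $d$.
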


\begin{proof}
Note that $1116=2^2\cdot3^2\cdot 31$.
We divide the proof into two cases.

If $n\equiv 0 \pmod 6$ then $n^{30}\equiv 0\pmod {1116}$ or $n^{30}\equiv 900\pmod {1116}$ depending on whether $n$ is an integer multiple of $31$ or not.
In either case, the remainder is a perfect square and therefore $\lfloor n^{30}/1116\rfloor$ is composite for all but finitely many such $n$.

Consider next that $n\not\equiv 0 \pmod 6$. Write $n=186m+r=2\cdot3\cdot31\cdot m+r$ where $m$ and $r$ are integers with
$0<r<186$ and $r\not\equiv 0 \pmod 6$.
We claim that in this case, we have that
\begin{equation}\label{e30}
\left\lfloor \frac{n^{30}}{1116}\right\rfloor \equiv 5m\,r^{28}(2697m+r)+\left\lfloor \frac{r^{30}}{1116}\right\rfloor \pmod 6
\end{equation}
Indeed, by expanding $n^{30}/1116$ we obtain
\begin{equation*}
\frac{n^{30}}{1116}=\frac{(186m+r)^{30}}{2^2\cdot3^2\cdot31}=\sum_{k=0}^{30}{{30} \choose k}2^{28-k}3^{28-k}31^{29-k}m^{30-k}r^k
\end{equation*}
Obviously, for $0\le k\le 27$ the general term of the above sum is an integer multiple of $6$ while the last three terms are
\begin{equation*}
{{30} \choose 28}31q^2r^{28}=5\cdot2697m^2r^{28},\,\, {{30} \choose 29}2^{-1}3^{-1}mr^{29}= 5qr^{29},\,\,\text{and}\,\, r^{30}/1116,
\end{equation*}
respectively. Equality (\ref{e30}) follows after taking integer parts on both sides.

Next, we claim that $\lfloor n^{30}/1116\rfloor$ is an even integer if $r\equiv \pm 1, \pm 2 \pmod{6}$, and that $\lfloor n^{30}/1116\rfloor$ is an integer multiple of $3$ if $r\equiv 3 \pmod{6}$.

Indeed, one can easily see that $5m\,r^{28}(2697m+r)$ is even for any choice of $m$ and $r$, and it is an integer multiple of $3$ if $r\equiv 3 \pmod{6}$.
Moreover, a finite search of the range $0<r<186$ reveals that $\lfloor r^{30}/1116\rfloor$ is an even integer if $r\not\equiv 0 \pmod 3$ and an integer multiple of $3$ if $r\equiv 3\pmod 6$.
This completes the proof.

\end{proof}

%%%%%%%%%%%%%%%%%%%%%%%%%%%%%%%%%%%%%%%%%%%%%%%%%%%%%%%%%%%%%%%%%%%%%%%%%%%%%%%%%%%%%%%%%%%%%%%%%%%
\section{Three general families}

We present three simple instances of eventually prime-free sequences of the form $\{\lfloor n^t/d\rfloor\}_n$.

\begin{theorem}\label{p}
Let $p$ be an odd prime. Then, the sequence with general term $\lfloor n^{p-1}/p\rfloor$ is eventually prime-free.
\end{theorem}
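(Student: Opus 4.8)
The plan is to derive this directly from Fermat's Little Theorem and Lemma \ref{lemma1}. Since $p$ is an odd prime we have $p\ge 3$, so the exponent $t=p-1$ satisfies $t\ge 2$ and the modulus $d=p$ satisfies $d\ge 2$; thus the structural hypotheses of Lemma \ref{lemma1} are in force. The essential observation is that Fermat's Little Theorem restricts the residue $n^{p-1}\bmod p$ to only two values: $n^{p-1}\equiv 1\pmod p$ when $p\nmid n$, and $n^{p-1}\equiv 0\pmod p$ when $p\mid n$.

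First I would note that both of these admissible residues are themselves perfect $(p-1)$-st powers lying in the interval $[0,p)$. Indeed $0=0^{\,p-1}$ and $1=1^{\,p-1}$, and both $0$ and $1$ are strictly less than $p$ precisely because $p\ge 3$. Consequently, for every integer $n$ one can select $r\in\{0,1\}$ with $n^{p-1}\equiv r^{p-1}\pmod p$ and $0\le r^{p-1}<p$, which is exactly the congruence condition demanded by Lemma \ref{lemma1}. Applying that lemma with $t=p-1$ and $d=p$ then yields immediately that $\lfloor n^{p-1}/p\rfloor$ is composite for all $n\ge 2p$, so the sequence is eventually prime-free with threshold $n_0=2p$.

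I do not expect a genuine obstacle here. In contrast to the harder theorems of the previous sections, where certain residues $n^t\bmod d$ failed to be perfect powers and forced the more delicate argument of exhibiting a fixed prime divisor, Fermat's Little Theorem guarantees that the only residues that ever arise are $0$ and $1$, both of which are $(p-1)$-st powers; no residue class needs special treatment and the difference-of-like-powers factorization $n^{p-1}-r^{p-1}=(n-r)\sum_{k=0}^{p-2}n^k r^{\,p-2-k}$ supplied by Lemma \ref{lemma1} handles every case uniformly. The only point worth verifying explicitly is that $r=1$ is admissible, i.e.\ that $1<p$; this is where the oddness of $p$ (hence $p\ge 3$) is used, and it is exactly what excludes the degenerate modulus $p=2$.
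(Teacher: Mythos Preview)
Your proof is correct and follows essentially the same route as the paper: Fermat's Little Theorem forces $n^{p-1}\equiv 0$ or $1\pmod p$, both residues are small perfect powers, and the floor then factors nontrivially. The only cosmetic difference is that you invoke Lemma~\ref{lemma1} and the factorization $n^{p-1}-r^{p-1}=(n-r)\sum_{k=0}^{p-2} n^k r^{\,p-2-k}$, whereas the paper exploits the evenness of $p-1$ to write $\lfloor n^{p-1}/p\rfloor=(n^{(p-1)/2}-r)(n^{(p-1)/2}+r)/p$ as a difference of squares.
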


\begin{proof}

This is a simple consequence of Fermat's Little Theorem, as for every positive integer $n$ we have that
$n^{p-1}\equiv 0, 1 \pmod{p}$. So, $n^{p-1}= Mp +r^2$ where $r\in\{0, 1\}$. But then,
\begin{equation*}
\left\lfloor\frac{n^{p-1}}{p}\right\rfloor= \left\lfloor M+\frac{r^2}{p}\right\rfloor=M=\frac{n^{p-1}-r^2}{p}=\frac{(n^{(p-1)/2}-r)(n^{(p-1)/2}+r)}{p},
\end{equation*}
which is composite for all sufficiently large $n$.
\end{proof}

\begin{theorem}\label{pp}

Let $p$ be an odd prime of the form $p=q^2+1$. Then, the sequence with the general term $\lfloor n^{(p-1)/2}/p\rfloor$ is eventually prime-free.
In particular, each of the following sequences has the desired property:
\begin{equation*}
\left\lfloor \frac{n^2}{5}\right\rfloor, \left\lfloor \frac{n^8}{17}\right\rfloor, \left\lfloor \frac{n^{18}}{37}\right\rfloor, \left\lfloor \frac{n^{50}}{101}\right\rfloor,\left\lfloor \frac{n^{98}}{197}\right\rfloor, \left\lfloor \frac{n^{128}}{257}\right\rfloor, \ldots
\end{equation*}
\end{theorem}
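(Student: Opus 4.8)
The plan is to combine Euler's criterion with the difference-of-squares factorization already exploited in Theorems \ref{thm2} and \ref{thm4}. The decisive point is that the hypothesis $p = q^2 + 1$ forces the three possible residues of $n^{(p-1)/2}$ modulo $p$ to be perfect squares smaller than $p$, which is exactly the situation handled by the earlier even-exponent arguments.

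First I would record the elementary facts about the exponent. Since $p$ is an odd prime, $q^2 = p - 1$ is even, so $q$ is even; writing $q = 2s$ gives $p = 4s^2 + 1$ and hence $t := (p-1)/2 = 2s^2$. In particular $t$ is even and $t/2 = s^2$ is a positive integer, which is precisely what is needed to form $n^{t/2}$ in the factorization below.

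Next I would compute $n^t \bmod p$ for every integer $n$. By Fermat's Little Theorem in the form of Euler's criterion, for $p \nmid n$ one has $n^{(p-1)/2} \equiv 1 \pmod p$ when $n$ is a quadratic residue modulo $p$ and $n^{(p-1)/2} \equiv p - 1 \pmod p$ otherwise, while $n^{(p-1)/2} \equiv 0 \pmod p$ when $p \mid n$. Thus the residue of $n^t$ modulo $p$ always lies in $\{0, 1, p-1\} = \{0^2, 1^2, q^2\}$; each of these is a perfect square strictly less than $p$, and this is the only place where the special form $p - 1 = q^2$ enters. Writing the residue as $r^2$ with $r \in \{0, 1, q\}$, we obtain $n^t = Mp + r^2$ with $0 \le r^2 < p$, whence
\begin{equation*}
\left\lfloor \frac{n^t}{p} \right\rfloor = M = \frac{n^t - r^2}{p} = \frac{\left(n^{t/2} - r\right)\left(n^{t/2} + r\right)}{p}.
\end{equation*}

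Finally I would conclude compositeness for all sufficiently large $n$ exactly as in Theorem \ref{thm4}: the prime $p$ divides the numerator $(n^{t/2} - r)(n^{t/2} + r)$, hence divides one of the two factors, and for $n \ge 2p$ that quotient together with the remaining factor both exceed $1$, so $M$ is a product of two integers greater than $1$ and therefore composite. The listed sequences then correspond to $q = 2, 4, 6, 10, 14, 16$, i.e. $p = 5, 17, 37, 101, 197, 257$. I expect no real obstacle here: the entire content is the arithmetic remark that $p - 1 = q^2$ turns the quadratic non-residue case into a perfect-square remainder, after which the argument collapses to the routine factorization used throughout the previous sections.
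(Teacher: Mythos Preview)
Your proof is correct and follows essentially the same route as the paper: observe that $q$ is even so $(p-1)/4$ is an integer, use Fermat/Euler to get $n^{(p-1)/2}\equiv 0,1,-1\pmod p$, rewrite $-1$ as $q^2$ using $p=q^2+1$, and then factor $\lfloor n^{(p-1)/2}/p\rfloor$ as a difference of squares $(N-r)(N+r)/p$ with $N=n^{(p-1)/4}$ and $r\in\{0,1,q\}$. The only cosmetic difference is that the paper introduces $N=n^{(p-1)/4}$ at the outset and deduces $N^2\equiv 0,\pm 1$ from $N^4\equiv 0,1$, whereas you invoke Euler's criterion directly; the resulting factorizations are identical.
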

\begin{proof}
Note that $q$ must be even and therefore $p\equiv 1\pmod 4$.

Denote, $N=n^{(p-1)/4}$. Then, Fermat's Little Theorem gives
\begin{equation*}
N^4\equiv 0, 1 \!\!\!\!\pmod{p}\!\iff\! N^2=0,1,-1\!\!\!\! \pmod{p} \!\iff\! N^2\equiv 0, 1, q^2\!\!\!\! \pmod{p}
\end{equation*}

Finally
\begin{equation*}
\left\lfloor\frac{n^{(p-1)/2}}{p}\right\rfloor=\left\lfloor\frac{N^2}{p}\right\rfloor= \frac{N^2-r^2}{p}=\frac{(N-r)(N+r)}{p},\, \text{where}\,\, r=0, 1, q.
\end{equation*}
This is composite for all sufficiently large $n$.
\end{proof}

The sequence of primes of the form $q^2+1$ appears as A002496 in \cite{oeis}. As mentioned earlier, it is unknown whether this sequence has infinitely many terms.
\begin{align*}
&2, 5, 17, 37, 101, 197, 257, 401, 577, 677, 1297, 1601, 2917, 3137, 4357, \\
&5477, 7057, 8101, 8837, 12101, 13457, 14401, 15377, 15877, 16901, 17957, \\
&21317, 22501, 24337, 25601, 28901, 30977, \ldots
\end{align*}

\begin{theorem}\label{ppp}
Let $p\ge 5$ be an odd prime of the form $p=q^6+q^3+1$. Then, the sequence with general term $\lfloor n^{(p-1)/3}/p\rfloor$ is eventually prime-free.In particular, each of the following sequences has the desired property:
\begin{equation*}
\left\lfloor \frac{n^{24}}{73}\right\rfloor, \left\lfloor \frac{n^{252}}{757}\right\rfloor, \left\lfloor \frac{n^{87552}}{262657}\right\rfloor, \ldots
\end{equation*}
\end{theorem}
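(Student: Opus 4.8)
The plan is to follow the template of Theorems \ref{p} and \ref{pp}, replacing the difference-of-squares factorization by a difference-of-cubes factorization. Writing $e=(p-1)/3$, the idea is that Fermat's Little Theorem forces $n^{e}$ to be a cube root of unity modulo $p$ whenever $\gcd(n,p)=1$, and that every such cube root is itself a perfect cube with a bounded base. Since $p=q^{6}+q^{3}+1$ divides $q^{9}-1$, one has $q^{9}\equiv 1\pmod p$, so $q^{3}$ is a cube root of unity; the three residues $x$ with $x^{3}\equiv 1\pmod p$ are then exactly $1=1^{3}$, $q^{3}=q^{3}$ and $q^{6}=(q^{2})^{3}$, each of which lies in $[0,p)$ and is a perfect cube. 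Adding the case $p\mid n$ (remainder $0=0^{3}$), I get that for every $n$ the remainder of $n^{e}$ upon division by $p$ equals $r^{3}$ for some $r\in\{0,1,q,q^{2}\}$.

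The arithmetic input I would establish first is that $9\mid p-1$; this is what makes $e=(p-1)/3$ divisible by $3$ and hence lets me write $n^{e}=(n^{e/3})^{3}$. The cleanest route is to compute the multiplicative order of $q$ modulo $p$: since $p\mid q^{9}-1$ we have $q^{9}\equiv 1$, so the order divides $9$; it cannot be $1$ or $3$, because $p>q-1$ and $p=q^{6}+q^{3}+1>q^{3}-1$ rule out $p\mid q-1$ and $p\mid q^{3}-1$ (here $q\ge 2$, as $p\ge 5$). Hence the order is exactly $9$, and by Lagrange's theorem $9\mid p-1$; in particular the exponent $(p-1)/3$ is a genuine integer divisible by $3$. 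The same facts, together with $1+q^{3}+q^{6}=p\equiv 0\pmod p$, confirm that $q^{3}$ is a \emph{primitive} cube root of unity and pin down the three residues listed above.

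With $M:=(p-1)/9$ a positive integer and $e=3M$, the finishing step mirrors Lemma \ref{lemma1}. For each $n$, the remainder being $r^{3}$ gives
\begin{equation*}
\left\lfloor\frac{n^{(p-1)/3}}{p}\right\rfloor=\frac{n^{3M}-r^{3}}{p}=\frac{(n^{M}-r)\,(n^{2M}+n^{M}r+r^{2})}{p},
\end{equation*}
which is an integer because $p\mid n^{e}-r^{3}$. Here I would use that $p$ is prime: it divides one of the two numerator factors, and for all sufficiently large $n$ both factors exceed $p$ (since $r\in\{0,1,q,q^{2}\}$ stays bounded while $n^{M}\to\infty$). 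Consequently the quotient splits as a product of two integers each at least $2$, so $\lfloor n^{(p-1)/3}/p\rfloor$ is composite for all large $n$, which is the asserted eventual prime-freeness.

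I do not expect a serious obstacle, as every ingredient is either Fermat's Little Theorem or an elementary order computation. The only point requiring genuine care is verifying $9\mid p-1$, i.e.\ that the order of $q$ modulo $p$ is exactly $9$ and not a proper divisor of $9$; this is exactly where the special form $p=q^{6}+q^{3}+1$ is used. Everything else — identifying the cube roots of unity as the perfect cubes $1,q^{3},q^{6}$, and invoking the difference-of-cubes factorization — is a direct transcription of the arguments already used for Theorems \ref{p} and \ref{pp}.
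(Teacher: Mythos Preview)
Your proposal is correct and follows essentially the same approach as the paper: both show $9\mid p-1$, set $N=n^{(p-1)/9}$, identify the possible residues of $N^{3}\pmod p$ as the perfect cubes $0,1,q^{3},q^{6}$, and finish with the difference-of-cubes factorization. You supply more detail than the paper on why $9\mid p-1$ (via the order of $q$), whereas the paper simply declares this ``easy to check'' and invokes Lagrange's polynomial-root bound to pin down the three cube roots of unity, but the substance is identical.
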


\begin{proof}
It is easy to check that necessarily $p\equiv 1\pmod 9$.

Denote, $N=n^{(p-1)/9}$. Then, by Fermat's Little Theorem, we obtain
\begin{equation*}
N^9\equiv 0, 1 \!\!\!\!\pmod{p}\!\iff\! N^3=0,1,q^3,q^6\!\!\!\! \pmod{p}
\end{equation*}
The above equivalence is a result of Lagrange's theorem which guarantees that a
cubic polynomial has at most three incongruent solutions modulo a given prime.
Finally, for some $r \in \{0, 1, q, q^2\}$ we have
\begin{equation*}
\left\lfloor\frac{n^{(p-1)/3}}{p}\right\rfloor=\left\lfloor\frac{N^3}{p}\right\rfloor= \frac{N^3-r^3}{p}=\frac{(N-r)(N^2+Nr+r^2)}{p},
\end{equation*}
which is composite for all sufficiently large $n$.
\end{proof}

The sequence of primes of the form $q^6+q^3+1$ appears as A162601 in \cite{oeis}. As mentioned earlier, it is unknown whether this sequence has infinitely many terms.
\begin{align*}
&3, 73, 757, 262657, 1772893, 64008001, 85775383, 308933353,\\
& 729027001, 15625125001, 17596420453, 30841155073, 46656216001, \\
& 225200075257, 885843322057, 1126163480473, \ldots
\end{align*}
%%%%%%%%%%%%%%%%%%%%%%%%%%%%%%%%%%%%%%%%%%%%%%%%%%%%%%%%%%%%%%%%%%%%%%%%%%%%%%%%%%%%%%%%%%%%%%%%
\section{ A final result}

Based on the results above, one may expect that greater values of $d$ require larger values of $t$ for $\lfloor n^t/d\rfloor$ to be eventually prime-free.
However, we can prove the following result.
\begin{theorem}\label{thmlast}
Let $p$ be an odd prime and let $c$ be such that $-c-1$ is a quadratic nonresidue modulo $p$. For all integers $n\ge 1$ define
\begin{equation}\label{qn}
q_n=\left\lfloor\frac{(p-1)!(n^2+c)}{p}\right\rfloor.
\end{equation}
Then $q_n$ has a prime factor smaller than $p$ and therefore is composite for all $n\ge 1$.
\end{theorem}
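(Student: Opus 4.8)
The plan is to combine Wilson's theorem with the quadratic nonresidue hypothesis. First I would invoke Wilson's theorem, $(p-1)!\equiv -1\pmod p$, to pin down the residue of the numerator. Writing $s$ for the unique integer with $0\le s\le p-1$ and $(p-1)!(n^2+c)\equiv s\pmod p$, Wilson gives $s\equiv -(n^2+c)\pmod p$. With this in hand I can record the exact integer identity
\[
q_n=\frac{(p-1)!(n^2+c)-s}{p},
\]
which is the object whose divisibility I will analyze.

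The heart of the argument is the single place where the hypothesis enters. I would observe that $s=1$ is equivalent to $-(n^2+c)\equiv 1\pmod p$, that is, to $n^2\equiv -c-1\pmod p$. Since $-c-1$ is assumed to be a quadratic nonresidue modulo $p$, this congruence has no solution, so $s\neq 1$ for every $n\ge 1$. Thus the hypothesis is exactly what removes the one obstructing value of $s$.

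With $s\neq 1$, I would split into the remaining cases. If $s\ge 2$, choose any prime $\ell$ dividing $s$; then $\ell\le s\le p-1<p$, so $\ell\mid (p-1)!$ and hence $\ell\mid (p-1)!(n^2+c)$, while also $\ell\mid s$. Therefore $\ell$ divides $(p-1)!(n^2+c)-s=p\,q_n$, and since $\ell\neq p$ it divides $q_n$. If instead $s=0$, then $p\mid (p-1)!(n^2+c)$, and because $\gcd\bigl(p,(p-1)!\bigr)=1$ this forces $p\mid n^2+c$; consequently $q_n=(p-1)!\cdot\frac{n^2+c}{p}$ is divisible by $2$, indeed by every prime below $p$. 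In all cases $q_n$ carries a prime factor smaller than $p$.

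The main obstacle, and essentially the only nonroutine point, is recognizing that the sole ``bad'' residue is $s=1$ and tying it to the nonresidue condition; the rest is bookkeeping. One loose end I would tidy at the end is the passage from ``has a prime factor $<p$'' to ``composite'': once $n$ is large the quantity $q_n$ exceeds $p$, so the small prime factor exhibits $q_n$ as a proper multiple and hence as a composite number, leaving only finitely many small values of $n$ to confirm directly.
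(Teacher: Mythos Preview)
Your argument is correct and follows essentially the same route as the paper's proof: write the division identity, use Wilson's theorem plus the nonresidue hypothesis to rule out remainder $1$, then in the remaining cases exhibit a small divisor of $q_n$. Your version is, if anything, slightly tidier---using a prime divisor $\ell$ of $s$ instead of $s$ itself avoids having to check that $s\mid (p-1)!$ for composite $s$---and your closing remark about the passage from ``has a prime factor $<p$'' to ``composite'' flags a point the paper glosses over.
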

 
\begin{proof}
Equation \eqref{qn} can be written as
\begin{equation*}
(p-1)!(n^2+c)=pq_n+r\, \quad \text{where} \,\, 0\le r\le p-1.
\end{equation*}
Note that $r\neq 1$. Indeed, if one assumes that $r=1$ then by using Wilson's theorem in equality \eqref{qn} it follows that
$-n^2-c\equiv 1\pmod{p}$, that is, $n^2\equiv -c-1 \pmod{p}$, which is impossible by the choice of $c$. 

Hence, either $r=0$ or $2\le r\le p-1$. 

In the first case, the equality $(p-1)!(n^2+c)=pq_n$ immediately implies $q_n\equiv 0 \pmod{(p-1)!}$ and therefore $q_n$ must be composite.. 

In the second case,  $2\le r\le p-1$ implies that $pq_n=(p-1)!(n^2+c)-r\equiv 0\pmod{r}$ since $(p-1)!\equiv 0 \pmod{r}$. It follows that $q_n\equiv 0\pmod r$ and we are done.
\end{proof}

%We venture the following
%\begin{conjecture}
%Given any integer $d\ge 2$, there exist integers $a, b, c$ with $a\ge 1$ and $\gcd(a,d)=1$, and for which $\lfloor(an^2+bn+c)/d\rfloor$ is composite for all $n\ge 1$.
%\end{conjecture}  

%%%%%%%%%%%%%%%%%%%%%%%%%%%%%%%%%%%%%%%%%%%%%%%%%%%%%%%%%%%%%%%%%%%%%%%%%%%%%%%%%%%%%%%%%%%%%%%%

\begin{table}[ht]
\begin{centering}
\begin{tabular}{c|c}
\hline
    $t$        & $d$             \\ % \abrule \\
\hline
$2$         & ${\bf 2, 3, 4, 5, 8, 12, 16}$  \\ %\abrule \\
$3$         & ${\bf 2, 9}$ \\ %\brule \\
$4$         & $2,3,4,5,8,12,16, {\bf 24, 40}$   \\
$6$         & $2,3,4,5,{\bf 7},8,9,12,16, {\bf 24, 56, 72}$   \\
$8$         & $2,3,4,5,8,12,16,{\bf 17},24,{\bf 32}, 40,{\bf 48, 80, 112}$   \\
$10$        & $2,3,4,5,8,{\bf 11}, 12, 16,{\bf 24, 40}$   \\
$12$        & $2,3,4,5,8,9,12,{\bf 13},16,24,40,{\bf 112, 144, 240} $   \\
$16$        & $2,3,4,5,8,12,16,17,24,32, 40,48, {\bf 64}, 80, 112,{\bf 544} $   \\
$18$        & $2,3,4,5,7,8,9,12,16,{\bf 19}, 24, {\bf 27, 36, 37, 54}, 56,{\bf 63}, 72, {\bf 252}$   \\
$20$        & $2,3,4,5,8,12,16, 24,{\bf 25}, 40, {\bf 200}$  \\
$22$        & $2, 3, 4, 5, 8, 12, 16, {\bf 23}$  \\
$24$        & $2,3,4,5,8,9,12,13,16,24,40,{\bf 73},112,144,{\bf 208}, 240,{\bf 288, 576}$   \\
$28$        & $2, 3, 4, 5, 8, 12, 16, {\bf 29}$  \\
$30$        & $2,3,4,5,7,8,9,12,16, 24,{\bf 31}, 56, 72, {\bf 1116}$   \\
$32$        & $2,3,4,5,8,12,16,17,24,32, 40,48, 64, 80, 112, {\bf 128, 192}, 544$   \\
$36$        & $ 2,3,4,5,7,8,9,12,16,19, 24,27, 36, 37, 54, 56,63, 72, 252, {\bf 432, 2664}$   \\
$40$        & $2,3,4,5,8,12,16, 17,24,25,32, 40,48, 80, 112,{\bf 176}, 200, {\bf 800, 1968}$   \\
$42$        & $2,3,4,5,7,8,9,12,16, 24,{\bf 43, 49}, 56, 72$ \\
$50$        & $ 2,3,4,5,8,11,12,16,24,40, {\bf 101}$   \\
$52$        & $2,3,4,5,8,12,16,24, 40, {\bf 53}$ \\
$54$        & $2,3,4,5,7,8,9,12,16, 19, 24, 27, 36, 37, 54, 56, 63, 72,{\bf 81}, 252,{\bf 1404}$   \\
\hline
\end{tabular}
\medskip
\caption{Pairs $(t, d)$ for which the sequence with general term $\lfloor n^t/d\rfloor$ is provably eventually prime-free. In boldface are values of $d$ with the property that there is no proper divisor $s$ of $t$ for which we can prove that $\lfloor n^{s}/d\rfloor$ is eventually prime-free.}
\label{table1}
\end{centering}
\end{table}
%\vspace{-2.2cm}
\section{Conclusions}
There are many interesting questions left unanswered. On one hand, we are far from being certain that Table \ref{table1} is complete.
While the problem of characterizing the pairs $(t,d)$ for which $\lfloor n^t/d\rfloor$ is eventually prime-free is undoubtedly a very difficult one, one may ask for other results similar to those stated in Theorems \ref{p}, \ref{pp}, \ref{ppp}, and \ref{thmlast}.
\vspace{-0.3cm}

\end{document}